\def\ge{\geqslant}
\def\le{\leqslant}
\def\a{\alpha}
\def\G{\Gamma}
\def\D{\Delta}
\def\o{\omega}
\def\p{\pi}
\def\s{\sigma}
\def\l{\lambda}
\def\i{^{-1}}
\def\cn{\mathcal N}
\newcommand{\supp}{\text {\rm supp}}
\newcommand{\End}{\text {\rm End}}
\newcommand{\Hom}{\text {\rm Hom}}
\newcommand{\mc}{\mathcal}
\newcommand{\OO}{\mc{O}}
\newcommand{\PP}{\mathbb{P}}
\theoremstyle{plain}
\newtheorem{thm}{Theorem}[section] 
\newtheorem*{thm*}{Theorem} 
 \newtheorem{lem}[thm]{Lemma}
 \newtheorem{cor}[thm]{Corollary}
\theoremstyle{definition}
\theoremstyle{remark}
\newtheorem*{remark*}{Remark}
\newtheorem*{conj*}{Conjecture(*)}
\newtheorem*{claim*}{Claim}
\newtheorem*{fact1*}{Fact $(a)$}
\newtheorem*{fact2*}{Fact $(b)$}
\newtheorem*{fact3*}{Fact $(c)$}
\begin{document}
 
\title[]{Semi-stable locus of a group compactification} 
\author{Xuhua He}
\address{Department of Mathematics, The Hong Kong University of Science and Technology, Clear Water Bay, Kowloon, Hong Kong}
\email{maxhhe@ust.hk}
\author{Jason Starr}
\address{Department of Mathematics, Stony Brook University, Stony Brook, NY 11794, USA}
\email{jstarr@math.sunysb.edu}
\thanks{X. H. was partially supported by (USA) NSF grant  DMS 0700589 and (HK) RGC grant DAG08/09.SC03. J. S. was partially supported by an Alfred P. Sloan fellowship, NSF grant DMS-0553921 and NSF grant DMS-0758521.}

\begin{abstract}
In this paper, we consider the diagonal action of a connected semisimple group of adjoint type on its wonderful compactification. We show that the semi-stable locus is a union of the $G$-stable pieces and we calculate the geometric quotient. 
\end{abstract} 

\maketitle
 
\subsection{Introduction} 
Let $G$ be a connected, semisimple algebraic group of adjoint type over an algebraically closed field and $X$ be its wonderful compactification. We will give an explicit description of the semi-stable locus of $X$ (for the diagonal $G$-action) using Lusztig's $G$-stable pieces and calculate the geometric quotient $X//G$. We also deal with the case where the $G$-action is twisted by a diagram automorphism. 

The results will be used by the first author \cite{H3} to study character sheaves on the wonderful compactification. 

During the time the article was writing, we learned that De Concini, Kanna and Maffei \cite{DKM} described the semi-stable locus and geometric quotient for complete symmetric varieties (which includes as a special case the non-twisted conjugation action of $G$ on its wonderful compactification).  
 
\subsection{Geometric invariant theory}
The foundations of geometric invariant theory are developed in
\cite{GIT}.  We quickly review that part which we use.  Let $k$ be a
field.  The setup for geometric invariant theory over $k$ consists of
$(G,X,\tau,\mc{L},\psi)$ where
\begin{enumerate}
\item[(i)] 
$G$ is a reductive algebraic group over $k$, 
\item[(ii)]
$X$ is a separated, finite type $k$-scheme, 
\item[(iii)]
$\tau:G \times X \rightarrow X$ is an algebraic
action of $G$ on $X$, 
\item[(iv)]
$\mc{L}$ is an invertible sheaf on $X$, and
\item[(v)]
$\psi:\tau^* \mc{L} \rightarrow \text{pr}_X^* \mc{L}$ is a
  \emph{$G$-linearization} of $\mc{L}$ (where $\text{pr}_X: G \times X \to X$ is the projection), i.e.,
an
isomorphism of invertible sheaves on $G\times X$ which defines a
lifting of the action $\tau$ to an action of $G$ on $\text{Spec}_X
\text{Sym}^\bullet(\mc{L})$.
\end{enumerate}

The fundamental theorem of geometric invariant theory, \cite[Theorem
  1.10, p. 38]{GIT}, associates to this datum a pair
$(X^\text{ss}(\mc{L}), \phi)$.  Here
$X^\text{ss}(\mc{L})$ is the union $X_s$ over all positive integers $n$
and all $G$-invariant sections $s$ of $\Gamma(X,\mc{L}^{\otimes n})$, provided $X_s$ is affine (recall, $X_s$ is defined to be the
maximal open subscheme of $X$ on which $s$ is a generator of
$\mc{L}^{\otimes n}$).  
And $\phi$ is a $G$-invariant $k$-morphism
$$
\phi: X^\text{ss}(\mc{L}) \rightarrow X//_{\mc{L}} G
$$
which is a \emph{uniform categorical quotient} of the action of $G$
on $X^\text{ss}(\mc{L})$.  Moreover the following hold.
\begin{enumerate}
\item[(i)]
The morphism $\phi$ is affine and universally submersive.
\item[(ii)]
For some integer $n > 0$, there exists an ample invertible sheaf
$\mc{M}$ on $X//_{\mc{L}} G$ such that $\phi^*\mc{M}$ is isomorphic to
$\mc{L}^{\otimes n}$ as $G$-linearized invertible sheaves (in
particular, $X//_{\mc{L}} G$ is quasi-projective). 
\item[(iii)]
There exists a unique open subscheme $U$ of $X//_{\mc{L}} G$ such that
$\phi^{-1}(U)$ is the \emph{stable locus}.  And the induced morphism
$\phi:\phi^{-1}(U) \rightarrow U$ is a \emph{uniform geometric
  quotient} of $\phi^{-1}(U)$.
\end{enumerate}

Since we do not make use of them, we will not make precise the
definitions of uniform categorical quotient, stable locus and uniform
geometric quotient.  But we will use a few other known facts about
geometric invariant theory.

\medskip\noindent
\textbf{Fact 1.}
When $X$ is projective and $\mc{L}$ is ample, every open $X_s$ is
affine.  Thus
$X//_{\mc{L}} G$ is canonically isomorphic to
$$
X//_{\mc{L}} G = \text{Proj} \oplus_{n\geq 0} \Gamma(X,\mc{L}^{\otimes
  n})^G
$$
and $X^\text{ss}(\mc{L})$ is the maximal open subscheme of $X$ on
which the natural rational map from $X$ to $X//_{\mc{L}} G$ is
defined, \cite{Ses}.

\medskip\noindent
\textbf{Fact 2.}
Again when $X$ is proper, every $G$-orbit $O$ in $X^\text{ss}(\mc{L})$
contains a unique closed $G$-orbit in its closure (in
$X^\text{ss}(\mc{L})$).  And two $G$-orbits $O_1$ and $O_2$ in
$X^\text{ss}(\mc{L})$ are in the same fiber of $\phi$ if and only if
the associated closed $G$-orbits are equals.
In particular, $\phi$ establishes a natural bijection
between the points of $X//_{\mc{L}} G$ and the closed $G$-orbits in
$X^\text{ss}(\mc{L})$,
\cite{Ses}.

\medskip\noindent
\textbf{Fact 3.}(Matsushima's criterion)
A $G$-orbit $O$ is affine if and only if the stabilizer
group of one (and hence every) closed point is itself reductive,
\cite{Ri}.
In
particular, since the fibers of $\phi$ are affine, every closed
$G$-orbit in $X^\text{ss}(\mc{L})$ is affine, and hence has reductive
stabilizer group.  

\medskip\noindent
\textbf{Fact 4.}
If $X$ is normal (or if $X^\text{ss}(\mc{L})$ is normal), then $\phi$ factors through the normalization of the target. Thus by the universal property , the target $X//_{\mc{L}} G$ is normal. 

\subsection{Notations}
Now we fix the notations used in the rest of this article. 
Let $G$ be a connected semisimple algebraic group of adjoint type over
an algebraically closed field $k$. Let $B$ be a Borel subgroup of $G$,
$B^-$ be an opposite Borel subgroup and $T=B \cap B^-$. Let $(\a_i)_{i
  \in I}$ be the set of simple roots determined by $(B, T)$. We denote
by $W$ the Weyl group $N(T)/T$. For $w \in W$, we choose a
representative $\dot w$ in $N(T)$. For $i \in I$, we denote by $\o_i$
and $s_i$ the fundamental weight and the simple reflection
corresponding to $\a_i$.

For $J \subset I$, let $P_J \supset B$ be the standard parabolic
subgroup defined by $J$ and let $P^-_J \supset B^-$ be the parabolic
subgroup opposite to $P_J$. Set $L_J=P_J \cap P^-_J$. Then $L_J$ is a
Levi subgroup of $P_J$ and $P^-_J$. The semisimple quotient of $L_J$
of adjoint type will be denoted by $G_J$. We denote by $\p_{P_J}$
(resp. $\p_{P^-_J}$) the projection of $P_J$ (resp. $P^-_J$) onto
$G_J$. Let $W_J$ be the subgroup of $W$ generated by $\{s_j \mid j \in
J\}$ and $W^J$ be the set of minimal length coset representatives of
$W/W_J$.

\subsection{Wonderful compactification of $G$}
We consider $G$ as a $G \times G$-variety by left and right
translation. Then there exists a canonical $G \times G$-equivariant
embedding $X$ of $G$ which is called the {\it wonderful
  compactification} (\cite{DP}, \cite{Str}). The variety $X$ is an irreducible,
smooth projective $(G \times G)$-variety with finitely many $G \times
G$-orbits $Z_J$ indexed by the subsets $J$ of $I$. The boundary $X-G$
is a union of smooth divisors $\overline{Z_{I-\{i\}}}$ (for $i \in
I$), with normal crossing. The $G \times G$-variety $Z_J$ is
isomorphic to the product $(G \times G) \times_{P^-_J \times P_J}
G_J$, where $P^-_J \times P_J$ acts on $G \times G$ by $(q, p) \cdot
(g_1, g_2)=(g_1 q \i, g_2 p \i)$ and on $G_J$ by $(q, p) \cdot
z=\p_{P^-_J}(q) z \p_{P_J}(p) \i$. We denote by $h_J$ the image of
$(1, 1, 1)$ in $Z_J$ under this isomorphism.

\subsection{Twisted actions}
We follow the approach in \cite[Section 3]{HT}. Let $\s$ be an
automorphism on $G$ such that $\s(B)=B$ and $\s(T)=T$. We also assume
that $\s$ is a diagram automorphism, i.e., the order of $\s$ coincides
with the order of the associated permutation on $I$.

Let $G_{\s}$ (resp. $X_{\s}$) be the $(G \times G)$-variety which as a
variety is isomorphic to $G$ (resp. $X$), but the $G \times G$-action
is twisted by $(g, g') \mapsto (g, \s(g'))$. Then $G_{\s}$ is an open
$G \times G$-subvariety of $X_{\s}$ and we call $X_{\s}$ the wonderful
compactification of $G_{\s}$.

Under the natural bijection between $X$ and $X_{\s}$, we may identify
the $G \times G$-orbits on $X$ with the $G \times G$-orbits on
$X_{\s}$. We denote by $Z_{J, \s}$ the $G \times G$-orbit on $X_{\s}$
that corresponds to $Z_{\s(J)} \subset X$. Accordingly, we denote by
$h_{J, \s}$ the base point in $Z_{J, \s}$ which corresponds to the
base point $h_{\s(J)}$ of $Z_{\s(J)}$.

\subsection{$\s$-semisimple elements in $G_{\s}$}

We follow the notation of \cite{Spr}. An element $g \in G_{\s}$ is
called {\it $\s$-semisimple} if it is conjugated to an element in
$T$. We have the following result.

\begin{thm} \label{thm-1}
Let $g \in G_{\s}$. Then the following conditions are equivalent:

(1) The element $g$ is $\s$-semisimple.

(2) The $G$-orbit of $g$ is closed in $G_{\s}$.

In this case, the isotropy subgroup of $g$ in $G$ is reductive.
\end{thm}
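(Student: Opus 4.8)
The plan is to establish $(2)\Rightarrow(1)$ directly by a contraction argument, then to obtain $(1)\Rightarrow(2)$ by combining it with the separation of $\s$-semisimple classes by invariant functions, and finally to read off reductivity of the isotropy group from Matsushima's criterion. Throughout, write $g\cdot x=gx\s(g)\i$ for the twisted action of $G$ on $G_\s$; both conditions of the theorem depend only on the orbit $G\cdot g$.

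\emph{$(2)\Rightarrow(1)$.} Any automorphism of $G$ normalizes some Borel subgroup; applied to $\mathrm{Int}(g)\circ\s$ this gives $h\in G$ with $(\mathrm{Int}(g)\circ\s)(hBh\i)=hBh\i$, and unwinding this identity yields $h\i\cdot g=h\i g\s(h)\in N_G(B)=B$. Replacing $g$ by the $\s$-conjugate $h\i\cdot g$, we may assume $g=tu$ with $t\in T$ and $u\in R_u(B)$. Pick a regular dominant cocharacter $\l$ of $T$ fixed by $\s$ — for example the sum of the fundamental coweights, which $\s$ permutes because it permutes the simple roots. Since $\s(\l(c))=\l(c)$ we have $\l(c)\cdot g=\l(c)\,t\,u\,\l(c)\i=t\bigl(\l(c)u\l(c)\i\bigr)$, and this tends to $t$ as $c\to 0$ because $\l$ contracts $R_u(B)$. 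Hence $t\in\overline{G\cdot g}$, so if $G\cdot g$ is closed then $t\in G\cdot g$ and $g$ is $\s$-semisimple.

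\emph{$(1)\Rightarrow(2)$.} We may assume $g=t\in T$. The closure $C=\overline{G\cdot t}$ is a $G$-stable closed subvariety of the affine variety $G_\s$, so an orbit $O\subseteq C$ of minimal dimension is closed in $G_\s$. By the direction already proved, $O$ meets $T$, say $t'\in O\cap T$. Each $\s$-conjugation invariant regular function on $G_\s$ is constant on $C$, so $f(t)=f(t')$ for every $f\in k[G]^{G}$; since such functions separate the $\s$-conjugacy classes of elements of $T$ — a twisted form of the Chevalley restriction theorem — the elements $t$ and $t'$ are $\s$-conjugate, and therefore $O=G\cdot t'=G\cdot t$ is closed. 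A clean way to import the input of this paragraph is to pass to the (possibly disconnected) reductive group $\hat G=G\rtimes\langle\s\rangle$: the map $x\mapsto x\s$ identifies $G_\s$ with the connected component $G\s$ compatibly with the $G$-actions, $g$ is $\s$-semisimple exactly when $g\s$ is semisimple in $\hat G$, and $G\cdot g$ is closed exactly when the $\hat G$-conjugacy class of $g\s$ is; the equivalence then reduces to the standard fact that an element of a reductive group has closed conjugacy class iff it is semisimple.

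The last assertion is immediate from $(2)$: a closed subvariety of the affine variety $G_\s$ is affine, so $G\cdot g$ is an affine orbit and Fact 3 (Matsushima's criterion) shows that its isotropy group in $G$ is reductive. I expect the crux to be the implication $(1)\Rightarrow(2)$, and more precisely the input that the invariant functions — equivalently the categorical quotient $G_\s/\!/G$ — separate the $\s$-semisimple classes; in the untwisted case this is the Chevalley restriction theorem, and in general it is its twisted refinement (Steinberg, Springer), which one may either quote or extract from the structure theory of $\hat G$. A secondary point to watch is small characteristic dividing the order of $\s$, where the description of the semisimple elements in the coset $G\s$ needs the usual care.
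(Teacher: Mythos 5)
Your implication $(2)\Rightarrow(1)$ is a genuine, correct argument (Steinberg's theorem giving a Borel normalized by $\mathrm{Int}(g)\circ\s$, then contraction by a $\s$-fixed regular dominant cocharacter), and your treatment of the last assertion coincides with the paper's: $G_\s$ is affine, a closed orbit in it is affine, and Fact 3 gives reductivity of the stabilizer. Note that the paper itself does not argue the equivalence; it quotes Lusztig \cite[1.4(e)]{L1} (closed classes in a disconnected group) for $(1)\Leftrightarrow(2)$ and Springer \cite[Proposition 3]{Spr} for the simply connected case, so in spirit your proposal supplies more detail than the source for one direction.

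The gap is in $(1)\Rightarrow(2)$, and it sits exactly in the generality the theorem claims (arbitrary algebraically closed field, no restriction on the characteristic). Your main route needs that twisted-conjugation invariants separate the $\s$-conjugacy classes of elements of $T$ for the \emph{adjoint} group $G$ in \emph{every} characteristic; Springer's twisted Chevalley-type results are stated for simply connected groups (which is why the paper cites him only for that case), so this input is neither proved nor cleanly quotable as you present it. Your fallback via $\hat G=G\rtimes\langle\s\rangle$ is actually false as stated when $\mathrm{char}\,k$ divides the order of $\s$ (char $2$ for order-$2$ diagram automorphisms, char $3$ for triality): there $\s$ itself is a unipotent element of $\hat G$, so ``$g$ is $\s$-semisimple'' is \emph{not} equivalent to ``$g\s$ is semisimple in $\hat G$'' (take $g=1$), and the ``standard fact'' that closed classes are exactly the semisimple ones holds for connected reductive groups but not for disconnected ones. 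The correct statement for the coset $G\s$ is that a class is closed if and only if its elements are \emph{quasi-semisimple} (normalize a pair $B\supset T$), which is equivalent to the paper's notion of $\s$-semisimple in all characteristics --- and that is precisely Lusztig's result \cite[1.4(e)]{L1} which the paper invokes. So either quote Lusztig's quasi-semisimple formulation (at which point your argument collapses into the paper's), or supply a characteristic-free proof of the separation statement for adjoint $G$; what you call a ``secondary point to watch'' is in fact the crux of the remaining direction.
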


The equivalence of (1) and (2) can be found in \cite[1.4 (e)]{L1} (in terms of disconnected groups instead of twisted conjugation action). In the case of simply connected group, the equivalence is also proved in \cite[Proposition 3]{Spr}. By Fact 3, Matsushima's criterion, the $G$-orbit of $g$ is closed implies that the isotropy subgroup of $g$ is reductive. 

\subsection{$G$-stable-piece decomposition}
Let $G_{\D}$ be the diagonal image of $G$ in $G \times G$. The
classification of the $G_{\D}$-orbits on $X$ was obtained by Lusztig
\cite{L} in terms of $G$-stable pieces. A similar result also occurs
in \cite{EL}.  We list some known results which will be used later.

For $J \subset I$ and $w \in W^{\s(J)}$, set $$Z_{J, \s; w}=G_{\D} (B
\dot w, B) \cdot h_{J, \s}.$$ We call $Z_{J, \s; w}$ a {\it $G$-stable
  piece} of $X_{\s}$. By \cite[12.3]{L} and \cite[Proposition
  2.6]{H1}, $X_{\s}$ is a disjoint union of the $G$-stable pieces.

\medskip\noindent
\textbf{Fact 5.}
$X_{\s}=\bigsqcup_{J \subset I} \bigsqcup_{ w \in W^{\s(J)}} Z_{J, \s;
    w}.$

\medskip\noindent
Set $I(J, \s; w)=\max\{K \subset J \mid w \s(K)=K\}$. Then the
subvariety $L_{I(J, \s; w)} \dot w$ of $G_{\s}$ is stable under the
action of $L_{I(J, \s; w)} \times L_{I(J, \s; w)}$ and in particular,
is stable under the conjugation action of $L_{I(J, \s; w)}$. Moreover,
by \cite[12.3(a)]{L} and \cite[Lemma 1.4]{H2}, $$Z_{J, \s; w}=G_{\D}
(L_{I(J, \s; w)} \dot w, 1) \cdot h_{J, \s}$$ and there exists a
natural bijection between the $G_{\D}$-orbits on $Z_{J, \s; w}$ and
the $L_{I(J, \s; w)}$-orbits on $L_{I(J, \s; w)} \dot w/Z^0(L_J)
\subset G_{\s}/Z^0(L_J)$ (for the conjugation action of $L_{I(J, \s;
  w)}$).

For any point $z$ in $Z_{J, \s; w}$, the isotropy subgroup $$G_z=\{g
\in G \mid (g, g) \cdot z=z\}$$ was described explicitly in
\cite[Theorem 3.13]{EL}. We only need the following special case in
our paper.

\medskip\noindent
\textbf{Fact 6.}
Let $z=(g l \dot w, g) \cdot h_{J, \s}$ for $g \in G$ and $l \in
L_{I(J, \s; w)}$. Then $G_z$ is reductive if and only if $w=1$ and $l$
is a $\s$-semisimple element in $L_{I(J, \s; 1)}$.

\medskip\noindent
By \cite[Theorem 4.5]{H2}, the closure of each $G$-stable piece is a
union of $G$-stable pieces and the closure relation can be described
explicitly. More precisely, for $J \subset I$, $w \in W^{\s(J)}$ and
$w' \in W$, we write $w' \le_{J, \s} w$ if there exists $u \in W_J$
such that $w' \ge u w \s(u) \i$. Then $$\overline{Z_{J, \s;
    w}}=\sqcup_{J' \subset J} \sqcup_{w' \in W^{J'}, w' \le_{J, \s} w}
Z_{J', \s; w'}.$$ Notice that if $1 \le_{J, \s} w$, then we must have
$w=1$. Therefore,

\medskip\noindent
\textbf{Fact 7.}
$\sqcup_{J \subset I} Z_{J, \s; 1}$ is open in $X_{\s}$.

\subsection{Nilpotent Cone of $X$}
For any dominant weight $\l$, let ${\rm H}({\l})$ be the dual Weyl
module for $G_{\rm sc}$ with lowest weight $-\l$. Let $^{\s} {\rm
  H}(\l)$ be the $G_{\rm sc}$-module which as a vector space is ${\rm
  H}(\l)$, but the $G_{\rm sc}$-action is twisted by the automorphism
$\s$ on $G_{\rm sc}$. Then there exists (up to a nonzero constant) a
unique $G_{\rm sc}$ isomorphism $^{\s} {\rm H}(\l) \to {\rm
  H}(\s(\l))$. In particular, if $\l=\s(\l)$, then we have an
isomorphism $f_{\l}: {}^{\s} {\rm H}(\l) \to {\rm H}(\l)$.

By \cite[3.9]{DS}, there exists a $G \times G$-equivariant
morphism $${\rho_{\l}} : X \rightarrow \mathbb P \bigl( \End({\rm
  H}({\l})) \bigr)$$ which extends the morphism $G_\sigma \rightarrow
\mathbb P \bigl(\End({\rm H}({\l})) \bigr)$ defined by $g \mapsto g [
  {\rm Id}_{\l}]$, where $ [{\rm Id}_{\l}]$ denotes the class
representing the identity map on ${\rm H}({\l})$ and $g$ acts by the
left action.  We denote by $\mc{L}_X(\lambda)$ the $G_{\rm sc}\times
G_{\rm sc}$-linearized invertible sheaf on
$X$ which is the pullback under $\rho_\lambda$of $\OO(1)$ with its canonical linearization.  This
is the ``usual'' linearized 
invertible sheaf on $X$ associated to the weight
$\lambda$, e.g., as defined in \cite[p. 100]{BP}.  For sufficiently
divisible and positive $n$, the $G_{\rm sc} \times G_{\rm
  sc}$-linearization of $\mc{L}_X(\lambda)^{\otimes n} =
\mc{L}_X(n\cdot \lambda)$ factors through a $G\times
G$-linearization. This induces a $G_{\D}$-linearization of $\mc L_X(\lambda)^{\otimes n}$. If moreover, $\lambda$ is regular, then $\mc L_X(\lambda)$ is ample (see \cite[section 2]{Str}).

The morphism $\rho_{\l}$ induces a $G \times G$-equivariant morphism
$X_{\s} \to \mathbb P \bigl(\Hom({}^{\s} {\rm H}(\l), {\rm H} (\l))
\bigr)$. When $\l=\s(\l)$, we may apply the isomorphism $f_{\l}:
     {}^{\s} {\rm H}(\l) \to {\rm H}(\l)$ to obtain the $G \times
     G$-equivariant morphism $$\rho_{\l, \s}: X_{\s} \to \mathbb P
     \bigl( \End({\rm H}({\l})) \bigr).$$
As above, $\mc{L}_{X_{\s}}(\lambda,\sigma)$ denotes the $G_{\rm sc}
\times G_{\rm sc}$-linearized invertible sheaf on $X_\sigma$ which is
the pullback  under $\rho_{\lambda,\sigma}$ of $\OO(1)$ with its canonical linearization.  Of course
$X_\sigma$ equals $X$ as varieties, and $\mc{L}_X(\lambda)$ equals
$\mc{L}_{X_{\s}}(\lambda,\sigma)$ as invertible sheaves on this
variety.  But 
the $G\times G$-actions are not the same, and thus the $G\times
G$-linearized invertible sheaves are not the same.  

For $\l=\s(\l)$, let $\cn(\l)_{\s}$ be the subvariety of $X_{\s}$
consisting of elements that may be represented by a nilpotent
endomorphism of ${\rm H}(\l)$. We call $\cn(\l)_{\s}$ the {\it
  nilpotent cone} of $X_{\l}$ associated to the dominant weight
$\l$. We have an explicit description of $\cn(\l)$ which was obtained
in \cite[Proposition 4.4]{HT} $$\cn(\l)_{\s}=\bigsqcup_{J \subset I}
\bigsqcup_{\substack{w \in W^{\s(J)} \\ I(\l) \cap \supp(w) \neq
    \varnothing}} Z_{J, \s; w},$$ where $I(\l)=\{i \in I \mid a_i \neq
0\}$ of $I$ for $\l =\sum_{i \in I} a_i \o_i$ and $\supp(w) \subset I$
is the set of simple roots whose associated simple reflections occur
in some (or equivalently, any) reduced decomposition of $w$.

Two subvarieties of $X$ related to the nilpotent cones of $X$ are of
special interest. One is $$\cap_{\l \text{ is dominant}}
\cn(\l)_{\s}=\sqcup_{J \subset I} \sqcup_{w \in W^{\s(J)}, \supp(w)=I}
Z_{J, \s; w}.$$ This subvariety is actually the boudary of the closure
in $X_{\s}$ of unipotent subvariety of $G_{\s}$ in the case where $G$
is simple (See \cite[Theorem 4.3]{H1} and \cite[Theorem 7.3]{HT}).

The other one is $X_{\s}-\cup_{\l \text{ is dominant}}
\cn(\l)_{\s}=\sqcup_{J \subset I} Z_{J, \s; 1}$, which is the
complement of $\cn(\l)_{\s}$ for any $\s$-stable dominant regular
weight. By the next theorem, this subvariety is actually the
semi-stable locus of $X_{\s}$ for the $G_{\D}$-action.

\begin{thm} \label{thm-2}
For $\lambda$ as above, i.e., $\s$-stable, dominant and regular, the semistable
locus $(X_{\s})^\text{ss}(\mc{L}_X(\lambda)^{\otimes n})$ equals
$\sqcup_{J \subset I} Z_{J, \s; 1}$.  In particular, the semistable locus is independent of the choice of weight $\lambda$.  
\end{thm}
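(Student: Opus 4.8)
The plan is to prove the two inclusions separately, using the combinatorial description of $G$-stable pieces together with the Hilbert–Mumford numerical criterion on one side and an explicit construction of invariant sections on the other.

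\textbf{Step 1: The semistable locus is contained in $\sqcup_{J\subset I} Z_{J,\s;1}$.} Equivalently, every $G$-stable piece $Z_{J,\s;w}$ with $w\neq 1$ lies in the unstable locus. By Fact 5 and the closure description (Fact 7), it suffices to show that each point $z=(gl\dot w,g)\cdot h_{J,\s}$ with $w\neq 1$ is unstable for $\mc{L}_X(\lambda)^{\otimes n}$ under $G_\Delta$. I would do this via the Hilbert–Mumford criterion: since $w\neq 1$, $\supp(w)\neq\varnothing$, so by the explicit description of $\cn(\l)_\s$ recalled in the excerpt, $z$ lies in $\cn(\l)_\s$ for every dominant regular $\s$-stable $\l$; that is, $\rho_{\l,\s}(z)$ is represented by a nilpotent endomorphism of ${\rm H}(\l)$. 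For a nilpotent endomorphism $N$, one can choose a cocharacter of $G_\Delta$ (built from a Jordan basis, i.e., an $\mathfrak{sl}_2$-type grading) that drives $[N]$ to the zero section of $\OO(1)$; concretely, the diagonal one-parameter subgroup of $GL({\rm H}(\l))\times GL({\rm H}(\l))$ acting by $t\cdot N = t^{?}N$ scales $N$, and restricting to $G_\Delta$ one exhibits $\lim_{t\to 0}$ leaving the nonzero locus. This shows $z\notin X^{\rm ss}$, so $X^{\rm ss}(\mc{L}_X(\lambda)^{\otimes n})\subseteq X_\s - \cup_\l\cn(\l)_\s = \sqcup_J Z_{J,\s;1}$.

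\textbf{Step 2: The reverse inclusion.} Here I would use Fact 7: $U:=\sqcup_{J\subset I} Z_{J,\s;1}$ is open in $X_\s$, and is precisely the complement of $\cn(\l)_\s$ for any $\s$-stable dominant regular $\l$. Now $\cn(\l)_\s$ is, by construction, the zero locus of a natural $G_\Delta$-invariant section: the trace, or more robustly the characteristic-polynomial coefficients, give $G\times G$-invariant (hence $G_\Delta$-invariant) sections of $\mc{L}_X(\lambda)^{\otimes m}$ for suitable $m$ whose common zero locus is exactly the locus of nilpotent endomorphisms. Explicitly, $\Tr:\End({\rm H}(\l))\to k$ is $PGL$-invariant and homogeneous of degree $1$, so it pulls back under $\rho_{\l,\s}$ to a section of $\mc{L}_X(\lambda)$ (after passing to the power where the linearization descends to $G\times G$); more generally the $i$-th coefficient $c_i$ of the characteristic polynomial pulls back to a section of $\mc{L}_X(\lambda)^{\otimes i}$. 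An endomorphism (up to scalar) is nilpotent iff all $c_i$ vanish, so $U = X_\s - \cap_i\{c_i = 0\}$ is covered by the nonvanishing loci $X_{c_i}$. Since $\l$ regular makes $\mc{L}_X(\lambda)$ ample and $X_\s$ is projective, each $X_{c_i}$ is affine (Fact 1), so $U\subseteq X^{\rm ss}(\mc{L}_X(\lambda)^{\otimes n})$.

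\textbf{Combining and the main obstacle.} The two inclusions give $X^{\rm ss}(\mc{L}_X(\lambda)^{\otimes n}) = \sqcup_{J\subset I} Z_{J,\s;1}$, and since this set does not depend on $\l$, the ``in particular'' follows. The step I expect to be the main obstacle is Step 1: while it is intuitively clear that a point represented by a nilpotent endomorphism is GIT-unstable, one must check that the destabilizing cocharacter can be taken inside $G_\Delta$ (not merely inside $GL\times GL$), and that it genuinely destabilizes the point $z=(gl\dot w,g)\cdot h_{J,\s}$ and not just its image in $\PP(\End {\rm H}(\l))$ — i.e., that $\rho_{\l,\s}$ is compatible enough with linearizations. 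Handling the twist by $\s$ and the interplay between $G_{\rm sc}$ and $G$ linearizations (passing to powers $n$ where everything descends) requires care but is routine once the untwisted case is set up. An alternative to the cocharacter argument in Step 1, which sidesteps Hilbert–Mumford, is simply to observe that $\cn(\l)_\s$ is the zero locus of the $G_\Delta$-invariant sections $c_i$ constructed in Step 2, hence is automatically contained in the complement of every basic affine open $X_s$ with $s$ invariant that could witness semistability — but one still must rule out semistability witnessed by \emph{other} invariant sections, which is exactly what the numerical criterion delivers cleanly.
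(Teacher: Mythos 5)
Your Step 2 coincides with the paper's argument for the inclusion $\sqcup_{J}Z_{J,\s;1}\subseteq X_\s^{\rm ss}$: the characteristic-polynomial coefficients pull back to $G_{\D}$-invariant sections of powers of $\mc{L}_X(\lambda)$ whose common zero locus is exactly $\cn(\l)_\s$, and Fact 1 (with $\l$ regular, hence $\mc{L}_X(\lambda)$ ample) gives semistability off that locus. The gap is in Step 1, and it is precisely the point you yourself flag as ``the main obstacle'' without resolving it. Nilpotency of $\rho_{\l,\s}(z)$ shows $[\rho_{\l,\s}(z)]$ is unstable for the conjugation action of the full group $PGL({\rm H}(\l))$, whose invariants are generated by the $c_i$; it does not show instability for the much smaller subgroup $G_{\D}$. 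Shrinking the group enlarges the ring of invariants and hence shrinks the unstable locus, so ``nilpotent $\Rightarrow$ $G_{\D}$-unstable'' is exactly the content to be proved, not a formal consequence. A destabilizing cocharacter ``built from a Jordan basis'' of the nilpotent endomorphism lives in $GL({\rm H}(\l))$ and need not be conjugate into the image of $G_{\D}$, and ``restricting to $G_{\D}$'' a scaling one-parameter subgroup of $GL\times GL$ is not meaningful since that subgroup does not meet $G_{\D}$. Your fallback observation is also correctly self-defeating: the vanishing of the $c_i$ on $\cn(\l)_\s$ says nothing about other invariant sections of $\mc{L}_X(\lambda)^{\otimes n}$ that are not pulled back from $\PP(\End({\rm H}(\l)))$.

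The paper closes this direction by a different, orbit-theoretic route that you would need to reproduce (or replace by an actual Hilbert--Mumford computation). By Fact 7 the set $X_\s^{\rm ss}-\sqcup_J Z_{J,\s;1}$ is closed in $X_\s^{\rm ss}$ and $G_{\D}$-stable; if it were nonempty it would contain a $G_{\D}$-orbit closed in $X_\s^{\rm ss}$ (Fact 2), whose points have reductive stabilizer by Matsushima's criterion (Fact 3); but Fact 6 says the stabilizer of any point of $Z_{J,\s;w}$ with $w\neq 1$ is \emph{not} reductive. This uses the explicit stabilizer computation of Evens--Lu rather than a destabilizing cocharacter, and it is the ingredient your sketch is missing. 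If you insist on the numerical criterion instead, you must exhibit, for each $z=(gl\dot w,g)\cdot h_{J,\s}$ with $w\neq 1$, a cocharacter of $T$ (embedded diagonally) with negative Mumford weight on the fiber of $\mc{L}_X(\lambda)$ at the limit point; that is a genuine computation with the weights of ${\rm H}(\l)$ and the combinatorics of $w$, not a routine verification.
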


\begin{proof}
We simply write the semistable locus $(X_{\s})^\text{ss}(\mc{L}_X(\lambda)^{\otimes n})$ as $X_{\s}^\text{ss}$.  
On $\text{End}(H(\lambda))$ the
characteristic polynomial map
$$
\chi: \text{End}(H(\lambda)) \rightarrow k[t], \qquad (f: H(\lambda) \to H(\lambda)) \mapsto \chi_f(t)
$$
is a morphism which is
invariant under the conjugation action.  The coefficients of the
characteristic polynomial define homogeneous polynomials on
$\text{End}(H(\lambda))$ which are invariant under the conjugation
action.  Also the degree is positive except for the leading coefficient (which is $1$). Thus each non-leading coefficient defines a $G_{\D}$-invariant sections of positive power
$\OO(n)$ on $\PP(\text{End}(H(\lambda)))$.  The pullbacks of these
sections are $G_{\D}$-invariant sections of positive powers $\mc{L}^{\otimes
  n}$.  By Fact 1, the nonvanishing locus of each of these sections is
in the semistable locus.  Equivalently, the non-semistable locus is
contained in the common zero locus of all of these sections.  But the
common zero locus of these pullback sections on $X_{\s}$ equals the
inverse image of the common zero locus of the original sections on
$\PP(\text{End}(H(\lambda)))$.  And this common zero locus is
precisely the nilpotent cone in $\PP(\text{End}(H(\lambda)))$.  Thus
the non-semistable locus is contained in $\mc{N}(\lambda)_\sigma$.  So
$X_{\s}^\text{ss}$ contains $X_{\s} - \mc{N}(\lambda)_\sigma$, i.e.,
$X_{\s}^\text{ss}$ contains $\sqcup_{J\subset I} Z_{J,\s;1}$.  

Also, by Fact 7, $X_{\s}^{ss}-\sqcup_{J \subset I} Z_{J, \s;
  1}$ is closed in $X_{\s}^{ss}$. If $X_{\s}^{ss}$ strictly contains
$\sqcup_{J
  \subset I} Z_{J, \s; 1}$, then there exists a closed $G_{\D}$-orbit
in $X_{\s}^{ss}$ that is not contained in $\sqcup_{J \subset I} Z_{J,
  \s; 1}$. Let $z$ be an element in that orbit. By Fact 3 above, the
isotropy subgroup of $z$, $\{g \in G \mid (g, g) \cdot z=z\}$, 
is reductive. By
Fact 5 above, $z$ is in $Z_{J, \s; w}$ for some $J
\subset I$ 
and $w \in W^{\s(J)}$ with $w \neq 1$. But this contradicts 
Fact 6 above.  
Therefore $X_{\s}^{ss}$ equals $\sqcup_{J \subset I} Z_{J, \s; 1}$. 
\end{proof}

\

In the rest of this paper, we will describe the geometric quotient $X//G$. The description of the geometric quotient for the twisted action is based on some detailed analysis on the structure of the semi-stable locus and will be included in a future article. 

\begin{lem} \label{lem-3}
A $G_{\D}$-orbit $\mc{O}$ in $X^\text{ss}$ is closed in $X^\text{ss}$
if and only if it intersects $\bar{T}$.  
\end{lem}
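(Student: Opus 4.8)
The plan is to reduce both implications to the elementary fact that a conjugacy class in a reductive group is closed exactly when it is semisimple. Since the lemma concerns the untwisted variety $X$, we have $\s=1$ throughout, so Theorem~\ref{thm-2} gives $X^{\text{ss}}=\sqcup_{J\subset I}Z_{J;1}$; I will use that $[1,1,\bar t]=(t,1)\cdot h_J$ lies in $\bar T\cap Z_J\subset Z_{J;1}$ for every $t\in T$, so that $\bar T\subset X^{\text{ss}}$, and that $(\dot w,\dot w)\cdot\bar T=\bar T$ for all $w\in W$.

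For the implication ``$\mc O$ closed $\Rightarrow$ $\mc O\cap\bar T\neq\varnothing$'', Facts 2 and 3 show that the isotropy group of any $z\in\mc O$ is reductive. Writing $z$ in the normal form $z=(gl\dot w,g)\cdot h_J$ of Fact 6, reductivity forces $w=1$ and forces $l\in L_{I(J;1)}=L_J$ to be semisimple. Conjugating $l$ into $T$ inside $L_J$ and unwinding the defining relations of $Z_J$, one obtains a point of $\mc O$ of the form $[1,1,\bar t]$, which lies in $\bar T$.

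For the converse I argue by contradiction. After translating by a suitable $(\dot w,\dot w)$ I may assume $z_0:=[1,1,\bar t]\in\mc O\cap Z_J$; assume $\mc O$ is not closed in $X^{\text{ss}}$ and let $\mc O_0$ be the unique closed orbit in $\overline{\mc O}\cap X^{\text{ss}}$ (Fact 2). Since $\mc O_0$ lies in the affine fibre $\phi^{-1}(\phi(z_0))$, the Hilbert--Mumford theory (over an algebraically closed field, a closed orbit in the closure of another orbit is reached along a one-parameter subgroup) provides $\lambda':\mathbb G_m\to G$ with $z_1:=\lim_{s\to 0}(\lambda'(s),\lambda'(s))\cdot z_0\in\mc O_0$. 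I then compute $z_1$ through the morphism $\rho_\lambda$ attached to the regular weight $\lambda$ of the statement. The representative of $z_0$ in $\End({\rm H}(\lambda))$ is $A_0=\mu(t)\circ e_J$, where $e_J$ is the idempotent with $\rho_\lambda(h_J)=[e_J]$ and $\mu(t)$ denotes the action of $t$ on ${\rm H}(\lambda)$; thus $A_0$ is diagonalisable of rank $r_J:=\operatorname{rk} e_J$. Grading ${\rm H}(\lambda)$ by the weights of $\lambda'$ and writing $A_0$ as a sum of components that shift this grading by a fixed integer, the representative of $z_1$ is the component of lowest shift $m$, and since $A_0$ is nonzero and semisimple one has $m\le 0$.

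Two cases then arise. If $m<0$ the limiting endomorphism strictly lowers the grading, hence is nilpotent, so $z_1$ lies in the nilpotent cone $\cn(\lambda)$, which is contained in $X\setminus X^{\text{ss}}$ by Theorem~\ref{thm-2} --- contradicting $z_1\in\mc O_0$. If $m=0$ the limiting endomorphism is the $\lambda'$-block-diagonal part $D$ of $A_0$; since $A_0$ preserves the associated flag one has $\chi_D=\chi_{A_0}$, and $D$ is again semisimple, so $\operatorname{rk} D=\operatorname{rk} A_0=r_J$. But $z_1\in\overline{\mc O}\subset\overline{Z_J}=\sqcup_{J'\subset J}Z_{J'}$, and $r_{J'}<r_J$ whenever $J'\subsetneq J$ because $\lambda$ is regular; hence $z_1\in Z_J$, so $\mc O_0\subset Z_{J;1}$. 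Under the identification of the $G_\D$-orbits on $Z_{J;1}$ with the conjugacy classes on $L_J/Z^0(L_J)$, the orbit $\mc O$ corresponds to the semisimple class of $\bar t$, which is closed; therefore $\mc O$ is closed in $Z_{J;1}$, and $\mc O_0=\overline{\mc O}\cap Z_{J;1}=\mc O$ --- again a contradiction, so $\mc O$ is closed. The step I expect to be the real obstacle is precisely this limit computation --- organising $\End({\rm H}(\lambda))$ by the $\lambda'$-grading and proving the dichotomy ``nilpotent or rank-preserving'' for the degeneration of the diagonalisable endomorphism $A_0$ --- together with the final transfer of closedness of semisimple conjugacy classes through the description of $Z_{J;1}$.
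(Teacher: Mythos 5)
Your forward implication coincides with the paper's proof (closed orbit $\Rightarrow$ affine $\Rightarrow$ reductive isotropy by Facts 2 and 3, then Fact 6 forces $w=1$ and $l$ semisimple, and conjugating $l$ into $T$ produces a point of $\bar T$), so the only issue is the converse, where you take a genuinely different and much heavier route. The degeneration analysis itself is essentially sound: Kempf's one-parameter-subgroup theorem applied in the affine fibre $\phi^{-1}(\phi(z_0))$ is legitimate (it holds over an algebraically closed field in any characteristic, though it is a substantial external input the paper never needs); the limit of $[A_0]$ under conjugation by $\l'$ is indeed the extremal $\l'$-graded component; the dichotomy ``nilpotent limit'' versus ``block-diagonal limit with the same characteristic polynomial, hence the same rank'' is correct because $A_0=t\,e_J$ is diagonalizable; and regularity of $\l$ does give $r_{J'}<r_J$ for $J'\subsetneq J$, forcing $z_1\in Z_J$ and hence $\mc O_0\subset Z_{J,1}$.

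The genuine gap is the final step. You pass from ``the conjugacy class of $\bar t$ in $L_J/Z^0(L_J)$ is closed'' to ``$\mc O$ is closed in $Z_{J,1}$'' by invoking ``the identification'' of $G_\D$-orbits on $Z_{J,1}$ with conjugacy classes in $L_J/Z^0(L_J)$. But the fact quoted in the paper is only a bijection between two sets of orbits; as stated it carries no topological content, and matching closed orbits with closed classes is precisely what needs proof at this point. To justify it one must use the geometric form of the correspondence, e.g. the isomorphism $Z_{J,1}\cong G\times_{P_J}F_J$ of [H2, Proposition 1.10] (so that closed $G_\D$-stable subsets of $Z_{J,1}$ correspond to closed $P_J$-stable subsets of $F_J$) together with an analysis of the map from $F_J$ to $L_J/Z^0(L_J)$ showing that the saturation of a closed class is closed; you yourself flag this as ``the real obstacle'' but do not supply it, so the converse is incomplete as written. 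Note that the paper's own argument bypasses all of this: for $z\in\bar T$ the isotropy group contains $T$, the orbit lies in the affine fibre of $\phi$ containing the unique closed orbit in its closure, and the result of Steinberg cited there (an orbit in an affine variety whose isotropy group contains a maximal torus is closed) immediately gives $G_\D\cdot z=\mc O_0$. If you either prove the closedness transfer or replace your last step by this torus-stabilizer argument, your degeneration proof does go through, but at considerably greater length than the paper's.
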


\begin{proof}
By \cite[Lemma 6.1.6 (ii)]{BK}, $\bar{T}=\sqcup_{J \subset I} N(T)_{\D} \cdot (T, 1) \cdot h_J$. Let $z \in X^\text{ss}$ such that $G_{\D} \cdot z$ is closed in $X^\text{ss}$. Then $G_{\D} \cdot z$ is affine and $G_z$ is reductive. By Fact 6, $z$ is of the form $\{(g l, g) \cdot h_J \mid g \in G\}$ for some semisimple element $l \in L_J$. Hence $l$ is conjugate to an element in $T$. Therefore $G_{\D} \cdot z \cap \bar{T} \neq \emptyset$.

On the other hand, if $z \in \bar{T}$, then the isotropy subgroup of $z$ in $G$ contains $T$. Let $O$ be the unique closed $G$-orbit that is contained in the closure of $G_{\D} \cdot z$. Then $G_{\D} \cdot z$ and $O$ lie in the same fiber of $\phi$, which is an affine variety. By \cite[Page 70, Corollary 1]{St}, $G_{\D} \cdot z$ is closed in that fiber. Hence $\G_{\D} \cdot z=O$.
\end{proof}

\begin{lem} \label{lem-4}
For every element $z$ in $\bar{T}$, the intersection $G_{\D} \cdot z \cap \bar{T}$ of the $G$-orbit with $\bar{T}$ equals the $N(T)$-orbit
$N(T)_{\D} \cdot z$.
\end{lem}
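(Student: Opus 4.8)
The plan is to reduce the statement to a question about the big torus orbit and its normalizer in $G$, using the structure theorem for $\bar T$ quoted in the previous lemma. By \cite[Lemma 6.1.6 (ii)]{BK}, $\bar T = \sqcup_{J \subset I} N(T)_{\D} \cdot (T,1)\cdot h_J$, so every $z \in \bar T$ lies in $N(T)_{\D}\cdot (t,1)\cdot h_J$ for some $J$ and some $t \in T$; after translating by an element of $N(T)_{\D}$ we may assume $z = (t,1)\cdot h_J$ itself. The inclusion $N(T)_{\D}\cdot z \subset G_{\D}\cdot z \cap \bar T$ is immediate since $N(T) \subset G$, so the content is the reverse inclusion: if $(g,g)\cdot z$ lies in $\bar T$ for some $g \in G$, then $(g,g)\cdot z \in N(T)_{\D}\cdot z$.

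First I would pin down the stabilizer and the ambient $G\times G$-orbit of $z$. Since $z = (t,1)\cdot h_J$ lies in the orbit $Z_J$, and $Z_J \cong (G\times G)\times_{P_J^-\times P_J} G_J$ with $h_J$ the image of $(1,1,1)$, the point $(g,g)\cdot z$ lies in $\bar T \cap Z_J$, because $\bar T$ meets $Z_J$ exactly in $N(T)_{\D}\cdot(T,1)\cdot h_J$ by the same lemma and $G\times G$-orbits are disjoint. So it suffices to show: if $(g,g)\cdot(t,1)\cdot h_J \in N(T)_{\D}\cdot(T,1)\cdot h_J$, then in fact $(g,g)$ may be replaced by an element of $N(T)_{\D}$ up to the stabilizer of $z$. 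Writing the hypothesis out in the $\times_{P_J^-\times P_J}$-description, $(g,g)\cdot(t,1)\cdot h_J = (n,n)\cdot(t',1)\cdot h_J$ for some $n\in N(T)$, $t'\in T$, means $(n\i g,\, n\i g)\cdot (t,1)\cdot h_J = (t',1)\cdot h_J$; so after replacing $g$ by $n\i g$ it is enough to treat the case $(g,g)\cdot(t,1)\cdot h_J = (t',1)\cdot h_J$ and conclude $g \in N(T)$ (then the original $g$ lies in $nN(T)=N(T)$... wait, that shows the orbit point equals $z$ up to $N(T)$, which is what we want). Unwinding the torsor relation, $(g,g)\cdot(t,1) = (q t'', \,p)$ with $q\in P_J^-$, $p\in P_J$, $\pi_{P_J^-}(q)\,\pi_{P_J^-}(t'')\,\pi_{P_J}(p)\i = \pi_{P_J^-}(t)$ in $G_J$, i.e. we get $g t = q t''$, $g = p$, so $g \in P_J^-\cap P_J = L_J$ and moreover $gt = g\cdot(\text{something in }P_J^-)$. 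The upshot I expect is that $g$ normalizes $T$ once one keeps track of the $G_J$-component: the condition forces $\pi_{P_J}(g)$ to conjugate the image of $t$ in $G_J$ to the image of $t'$, and combined with $g \in L_J$ this should, via a standard argument about elements of a group conjugating one maximal torus element to another while fixing the relevant flags, give $g \in N_{L_J}(T) \subset N(T)$.

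Concretely, the key steps in order are: (i) use \cite[Lemma 6.1.6 (ii)]{BK} to reduce to $z = (t,1)\cdot h_J$ and to the assertion that $(g,g)\cdot z \in \bar T$ implies $(g,g)\cdot z \in N(T)_{\D}\cdot z$; (ii) observe $(g,g)\cdot z$ and $z$ lie in the same $G\times G$-orbit $Z_J$, intersect $\bar T$ there, and use the $(G\times G)\times_{P_J^-\times P_J}G_J$ model to write the hypothesis as an equation of the form $(n\i g, n\i g)\cdot(t,1) \equiv (t',1)$ modulo $P_J^-\times P_J$; (iii) extract from this that $n\i g \in L_J$ and that its image in $G_J$ conjugates $\bar t$ to $\bar{t'}$ inside $G_J$ while respecting the parabolic structure, hence lies in $N_{G_J}(\bar T_{G_J})$; (iv) lift back to conclude $g \in N(T)\cdot Z(L_J)\cdot n \subset N(T)$, since $Z(L_J)\subset T \subset N(T)$, giving $(g,g)\cdot z \in N(T)_{\D}\cdot z$. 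The main obstacle will be step (iii)--(iv): carefully translating the torsor relation into an honest statement about $G_J$ and confirming that "conjugates $\bar t$ to $\bar{t'}$" together with the parabolic constraints really does force the element into the normalizer of the torus, rather than merely into some coset of the centralizer; this requires being careful about which torus is being normalized in the Levi and about the identification of $\bar T \cap Z_J$ with $N(T)_{\D}$-translates of $(T,1)\cdot h_J$. One should also double-check the degenerate possibility that $C_{G_J}(\bar t)$ is strictly larger than $\bar T_{G_J}$ (i.e. $\bar t$ is not regular), and verify the argument still closes the orbit intersection down to exactly the $N(T)$-orbit rather than something in between.
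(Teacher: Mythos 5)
Your reduction to $z=(t,1)\cdot h_J$ and then to the single equation $(g,g)\cdot(t,1)\cdot h_J=(t',1)\cdot h_J$ is exactly the paper's, and your unwinding of the torsor description correctly yields $g\in P_J^-\cap P_J=L_J$ together with the fact that the image of $g$ in $G_J$ conjugates $\bar t$ to $\bar t'$. The gap is in your steps (iii)--(iv): the conclusion $g\in N_{L_J}(T)\subset N(T)$ is false in general, and no bookkeeping of ``parabolic constraints'' will rescue it. All you know about $g$ is that it lies in $L_J$ and conjugates $\bar t$ to $\bar t'$ in $G_J$; when $\bar t$ is not regular, $g$ may be any element of a coset of the centralizer $C_{L_J}(t)$, which is strictly larger than $T$. (Already for $J=I$ and $t=t'=1$ every $g\in G$ satisfies your hypothesis.) The caveat you append at the end --- about $C_{G_J}(\bar t)$ possibly being larger than the torus --- is not a routine check but exactly the point where the argument as written breaks.

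The repair, which is what the paper does, is to stop trying to locate $g$ and instead locate the point $(t',1)\cdot h_J$. Since the images of $t$ and $t'$ in $G_J$ are conjugate elements of a maximal torus, the standard fact that conjugate elements of a maximal torus are already conjugate under its normalizer produces some $n\in N(T)\cap L_J$ (a representative of an element of $W_J$) carrying $t$ to $t'$ modulo $Z^0(L_J)$. Because $n\in L_J$ and $Z^0(L_J)$ acts trivially on $h_J$, one gets $(t',1)\cdot h_J=(n,n)\cdot(t,1)\cdot h_J\in N(T)_{\D}\cdot z$ directly, with no claim about $g$ whatsoever. With that one substitution your outline coincides with the paper's proof; note also that the paper only needs $g\in P_J$ (via the identification $Z_{J,1}\cong G\times_{P_J}F_J$), so your sharper observation $g\in L_J$ is true but not where the work lies.
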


\begin{proof}
Obviously $N(T)_{\D} \cdot z$ is contained in $G_{\D} \cdot z \cap
\bar{T}$.  The content of the lemma is the opposite inclusion.

We may assume without loss of generality that $z$ has the form $z=(t, 1) \cdot h_J$. Suppose that $(g, g) \cdot z$ equals $(t', 1) \cdot h_J$ for some $t' \in T$, i.e., $(g,g)\cdot z$ is a point of $G_{\D} \cdot z \cap (T, 1) \cdot h_J$. 

Let $F_J=(P_J, P_J) \cdot h_J$, then by \cite[Proposition 1.10]{H2}, the action of $G$ on $X$ induces an isomorphism of $Z_{J, 1}$ with $G \times_{P_J} F_J$. Thus $g$ is in $P_J$.  Also both 
$t$ and $t'$ are contained in the same $P_J$-orbit in $L_J /Z^0(L_J)$,
i.e. in the same $G_J$-conjugacy class. Hence there exists an element
$n$ in $N(T)$ such that $t'$ equals $n t n \i$. 
Therefore
$G_{\D} \cdot z \cap (T, 1) \cdot h_J$ is a subset of $N(T)_{\D} \cdot
z$. Now also
\begin{align*} G_{\D} \cdot z \cap \bar{T} &=G_{\D} \cdot z
  \cap N(T)_{\D} (T, 1) \cdot h_J=N(T)_{\D} \cdot (G_{\D} \cdot
  z \cap (T, 1) \cdot h_J) \\ & \subset N(T)_{\D} \cdot ((N(T)_{\D}
  \cdot z)=N(T)_{\D} \cdot z\end{align*} 
proving the lemma.
\end{proof}

\begin{cor} \label{cor-5}
The embedding $\bar{T} \to X_{\s}$ induces an isomorphism 
$$
i: \bar{T}/W \to X//G.
$$
\end{cor}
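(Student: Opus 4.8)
The plan is to combine the general GIT facts with Lemmas~\ref{lem-3} and~\ref{lem-4} to get a set-theoretic bijection $\bar{T}/W\to X//G$, and then to promote it to an isomorphism of varieties using normality. \emph{First, the well-definedness of $i$.} I would check that $\bar{T}\subset X^{\text{ss}}$: from the decomposition $\bar{T}=\sqcup_{J\subset I}N(T)_{\D}\cdot(T,1)\cdot h_J$ used in the proof of Lemma~\ref{lem-3} and the inclusions $(T,1)\cdot h_J\in(B,B)\cdot h_J\subset G_{\D}(B,B)\cdot h_J=Z_{J,1}$, one gets $\bar{T}\subset\sqcup_J Z_{J,1}=X^{\text{ss}}$ by Theorem~\ref{thm-2}. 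Also $T_{\D}$ acts trivially on $\bar{T}$: it acts trivially on the dense subset $T$ (the diagonal torus acts on $T$ by conjugation), so the action map $T_{\D}\times\bar{T}\to\bar{T}$ and the projection, being morphisms that agree on the dense $T_{\D}\times T$, agree everywhere. Hence the $N(T)_{\D}$-action on $\bar{T}$ descends to a $W=N(T)/T$-action; since $\phi\colon X^{\text{ss}}\to X//G$ is $G_{\D}$-invariant, in particular $N(T)_{\D}$-invariant, the composite $\bar{T}\hookrightarrow X^{\text{ss}}\xrightarrow{\phi}X//G$ factors through a morphism $i\colon\bar{T}/W\to X//G$.

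\emph{Next, the bijectivity of $i$.} For surjectivity: given $p\in X//G$, Fact~2 provides a closed $G_{\D}$-orbit in $\phi^{-1}(p)$, which by Lemma~\ref{lem-3} meets $\bar{T}$, so $p$ lies in the image of $i$. For injectivity: if $z_1,z_2\in\bar{T}$ satisfy $\phi(z_1)=\phi(z_2)$, then by Lemma~\ref{lem-3} the orbits $G_{\D}\cdot z_1$ and $G_{\D}\cdot z_2$ are both closed in $X^{\text{ss}}$, hence equal by Fact~2, and Lemma~\ref{lem-4} then gives $z_2\in G_{\D}\cdot z_1\cap\bar{T}=N(T)_{\D}\cdot z_1$, i.e.\ $z_1$ and $z_2$ have the same image in $\bar{T}/W$.

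\emph{Finally, upgrading the bijection to an isomorphism.} Since $\bar{T}$ is closed in the projective variety $X$ it is projective, so $\bar{T}/W$ is projective and $i$ is proper; being bijective, $i$ is quasi-finite, hence finite. Both varieties are normal --- $\bar{T}$ is a toric variety and a quotient of a normal variety by a finite group is normal, and $X//G$ is normal by Fact~4 --- so, a finite birational morphism onto a normal variety being an isomorphism, it suffices to show $i$ is birational. Over the dense, $G_{\D}$-saturated open subset $G\subset X^{\text{ss}}$ the restriction of $\phi$ is the categorical quotient of $G$ by the conjugation action, namely $\Spec\,k[G]^{G}$; by the Chevalley restriction theorem the restriction map $k[G]^{G}\to k[T]^{W}$ is an isomorphism, so $G//G_{\D}\cong T/W$ and $i$ restricts to an isomorphism over the open set $\phi(G)\subset X//G$. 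In particular $i$ is birational.

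\emph{The main obstacle} is exactly this last step: turning the set-theoretic bijection into a scheme-theoretic isomorphism. The bijectivity is forced by the lemmas, but the isomorphism requires both the normality of $X//G$ and of $\bar{T}/W$ and the birationality of $i$, and the latter rests on identifying $\phi$ over the big cell $G\subset X$ with the adjoint quotient $G\to G//G\cong T/W$; in positive characteristic one must take care that the relevant restriction map on invariants is indeed the Chevalley isomorphism --- equivalently, that $i$ is not merely a purely inseparable bijection. An alternative that sidesteps this, in the spirit of the proof of Theorem~\ref{thm-2}, would be to build an explicit $G_{\D}$-invariant ``characteristic'' morphism $X^{\text{ss}}\to\bar{T}/W$ from the coefficients of characteristic polynomials of the $\rho_{\l}$, factor it through $\phi$, and verify that both composites with $i$ are the identity.
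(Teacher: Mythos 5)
Your proposal is correct and follows essentially the same route as the paper: bijectivity on points from Lemmas~\ref{lem-3} and~\ref{lem-4} together with Fact~2, birationality from the classical identification $G//G \cong T/W$ over the open cell $G\subset X^{\text{ss}}$ (the paper cites Steinberg where you invoke the group-theoretic Chevalley restriction theorem), and then normality of the target to upgrade to an isomorphism. The only cosmetic difference is that you conclude via properness and ``finite birational onto normal,'' while the paper applies Zariski's Main Theorem directly to the bijective birational morphism; you also spell out the saturatedness of $G$ and the inclusion $\bar{T}\subset X^{\text{ss}}$, which the paper leaves implicit.
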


\begin{proof}
Notice that $\bar{T}/N(T)$ equals $\bar{T}/W$ for the
natural $W$-action on $\bar{T}$ which extends the $W$-action on
$T$. 

The morphism $\bar{T} \to X \to X//G$ is
$N(T)$-invariant, and hence factors through a morphism
$$
i: \bar{T}/W \to X//G.
$$
By Lemma ~\ref{lem-3},
every closed $G$-orbit in $X^{ss}$ intersects 
$\bar{T}$.  Thus $i$ is surjective. For every element $z$ in
$\bar{T}$, the $G$-orbit of $z$ is closed in $X^\text{ss}$.  Thus
two elements $z, z'$ in $\bar{T}$ have the same image under $i$ if and
only if they lie in the same $G$-orbit.  
On the other hand, by Lemma
~\ref{lem-4}, two 
elements $z, z'$ in $\bar{T}$ lie in the same $G$-orbit if and only if
they lie in the same $N(T)$-orbit. Hence $i$ is a bijection on
points.  

By \cite[section 6]{Ste}, the restriction of $i$ to the open
subvariety $T/W$ of $\bar{T}/W$ gives an isomorphism $T/W \cong
G//G$. Hence, as above, $i$ is a bijective, birational morphism of
varieties whose target is a normal variety.  So again by Zariski's Main Theorem, $i$
is an isomorphism.  
\end{proof}

\nocite{DKM}

\bibliography{semistable}
\bibliographystyle{amsalpha}

\end{document}